\newtheorem{proposition}{Proposition}[section]
\newtheorem{lemma}[proposition]{Lemma}
\newtheorem{theorem}[proposition]{Theorem}
\theoremstyle{definition}
\newtheorem{definition}[proposition]{Definition}
\newtheorem{examples}[proposition]{Examples}
\newtheorem{remark}[proposition]{Remark}
\newcommand{\thlabel}[1]{\label{th:#1}}
\newcommand{\thref}[1]{Theorem~\ref{th:#1}}
\newcommand{\selabel}[1]{\label{se:#1}}
\newcommand{\delabel}[1]{\label{de:#1}}
\newcommand{\eqlabel}[1]{\label{eq:#1}}
\newcommand{\equref}[1]{(\ref{eq:#1})}
\newcommand{\Aut}{{\rm Aut}\,}
\newcommand{\im}{{\rm Im}\,}
\def\NN{{\mathbb N}}
\def\ZZ{{\mathbb Z}}
\newcommand{\Cc}{\mathcal{C}}
\def\*C{{}^*\hspace*{-1pt}{\Cc}}
\def\text#1{{\rm {\rm #1}}}
\begin{document}

\title[Crossed product of cyclic groups]
{Crossed product of cyclic groups}

\author{Ana-Loredana Agore}\thanks{The authors where supported by
CNCSIS grant 24/28.09.07 of PN II "Groups, quantum groups, corings
and representation theory".}
\author{Drago\c{s} Fr\u{a}\c{t}il\u{a}}

\address{Faculty of Mathematics and Computer Science,
University of Bucharest, Str. Academiei 14, RO-010014 Bucharest 1,
Romania} \email{ana.agore@fmi.unibuc.ro}
\email{dragos.fratila@fmi.unibuc.ro}

\subjclass[2000]{20B05, 20B35, 20D06, 20D40}

\keywords{}

\begin{abstract}{All crossed products of two cyclic groups are
explicitly described using generators and relations. A necessary
and sufficient condition for an extension of a group by a group to
be a cyclic group is given.}
\end{abstract}

\maketitle

\section*{Introduction}

One of the most frequently used results in elementary number
theory is the famous ancient Chinese Remainder Theorem. The
Chinese Remainder theorem can be restated in an abstract and
elegant language of group theory as follows: the direct product
$H\times G$ of two groups is a cyclic group iff the groups are
finite, cyclic of coprime orders. The direct product $H \times G$
is the trivial example of an extension of a group $H$ by a group
$G$, that is there exists an exact sequence of groups:
$$
\begin{diagram}
1 & \rTo & H & \rTo^{i_{H}} & H \times G & \rTo^{\pi_{G}} & G & \rTo
& 1
\end{diagram}
$$

It is therefore natural and tempting to consider the most general
problem:

\emph{\textbf{Problem 1}: Let $(E, i, \pi)$ be an
extension of $H$ by $G$: i.e. $E$ is a group, $i : H \to E$ and
$\pi : E \to G$ are morphisms of groups such that the sequence
$$
\begin{diagram}
1 &\rTo & H & \rTo^{i} & E & \rTo^{\pi} & G & \rTo & 1
\end{diagram}
$$
is exact. Give a necessary and sufficient condition for the group
$E$ to be cyclic.}

The main theorem of the paper (\thref{main_theorem}) gives a
complete answer to the above question. From this point of view
\thref{main_theorem} can be considered as an interesting and
non-trivial generalization of the Chinese Remainder Theorem.

To obtain this result we will go through the following steps: we
will do a survey of the famous "extension problem" of H\"{o}lder
\cite{holder}, then we will work in an equivalent way with the
crossed systems instead of exact sequences and in the end we will
explicitly compute all the symmetric, normalized 2-cocycles for
two fixed cyclic groups and the (crossed)twisted products
associated.

The extension problem was first stated by H\"{o}lder \cite{holder}. A recent survey and new results
related to the extension problem are obtained in \cite{MA}. In particular, crossed products arise
naturally when dealing with group extensions. \cite[Corollary 1.8]{MA} is another formulation of
Schreier theorem and shows that the existence of an extension of $H$ by $G$ is equivalent to the
existence of a normalized crossed system $(H, G, \alpha, f)$, where $\alpha : G \rightarrow \Aut
(H)$ is a weak action and $f : G \times G \rightarrow H$ is an $\alpha$-cocycle. The classical
extension problem of H\"{o}lder was restated in \cite[Problem 1]{MA} in a computational manner as
follows:

\emph{\textbf{Problem 2}: Let $H$ and $G$ be two fixed groups.
Describe all normalized crossed systems $(H, G, \alpha, f)$ and
classify up to isomorphism all crossed products $H \#_{\alpha}^{f}
\, G$.}

The first notable result regarding the extension problem was given
by O. L. H\"{o}lder (\thref{thmHolder}), who uses generators and
relations to describe all crossed products of two finite cyclic
groups. In the section $2$ of the paper we complete the structure
and we shall describe all crossed products of two cyclic groups
(not necesary finite) using generators and relations: see
\thref{finitvsinfinit}, \thref{infinitvsfinit} and
\thref{cicliceinfinite}. Related to Problem 2 another question
arise:

\emph{\textbf{Problem 3}: Let $\Lambda$ be a class of groups. Find
necessary and sufficient conditions for $(H, G, \alpha, f)$ such
that the crossed product $H \#_{\alpha}^{f} \, G$ belongs to
$\Lambda$.}

In \cite[Corollary 1.15]{MA} a complete answer is given for the
above problem in the case of abelian groups: the crossed product
$H \#_{\alpha}^{f} \, G$ is an abelian group if and only if $H$
and $G$ are abelian groups, $\alpha$ is the trivial action and $f$
is a symmetric 2-cocycle.

The present paper deals with this problem in the case of cyclic
groups. In the first section we recall the construction and
fundamental properties of crossed product of groups. In section
$2$ we describe crossed products between all types of cyclic
groups. Using the aforementioned results, in Section $3$ we find
necessary and sufficient conditions for a crossed product to be a
cyclic group (\thref{main_theorem}) which is the main result of
this paper.

\section{Preliminaries}\selabel{1}
Let us fix the notations that will be used throughout the paper.
$C_n$ will be a cyclic group of order $n$ generated by $a$: $C_n =
\{1\,,\, a\,,\, a^2\,,\, \cdots ,\, a^{n-1} \}$ and
$C_{g}=\{g^{k}\,|\,k\in \ZZ\}$ will denote a cyclic infinite
group. Let $H$ and $G$ be two groups. $\Aut (H)$ denotes the group
of automorphisms of a group $H$ and $Z(H)$ the center of $H$. A
map $f : G \times G \rightarrow H$ is called \emph{symmetric} if
$f (g_1, g_2) = f (g_2, g_1)$ for any $g_1$, $g_2 \in G$. For a
map $\alpha : G \rightarrow \Aut (H)$ we shall use the notation
$$
\alpha (g) (h) = g\triangleright h
$$
for all $g\in G$ and $h\in H$.

The maps $\alpha$ and $f$ are called \emph{trivial} if
$g\triangleright h = h$ for all $g\in G$ and $h\in H$,
respectively $f(g_{1},g_{2})=1$ for all $g_{1}, g_{2} \in G$.

\begin{definition}\delabel{crossedsystem}
A \textit{crossed system} of groups is a quadruple $(H, G, \alpha,
f)$, where $H$ and $G$ are two groups, $\alpha : G \rightarrow
\Aut (H)$ and $f : G \times G \rightarrow H$ are two maps such
that the following compatibility conditions hold:
\begin{eqnarray}
g_1 \triangleright (g_2\triangleright h) &=& f(g_1, g_2) \,
\bigl((g_1g_2)\triangleright  h \bigl)\, f(g_1, g_2)^{-1}
\eqlabel{WA} \\
f(g_1,\, g_2)\, f(g_1 g_2, \, g_3) &=&  \bigl(g_1 \triangleright
f(g_2, \, g_3) \bigl) \, f(g_1, \, g_2g_3) \eqlabel{CC}
\end{eqnarray}
for all $g_1$, $g_2$, $g_3 \in G$. The crossed system $\Gamma =
(H, G, \alpha, f)$ is called \textit{normalized} if $f(1, 1) = 1$.
The map $\alpha : G \rightarrow \Aut (H)$ is called a \textit{weak
action} and $f : G \times G \rightarrow H$ is called an
$\alpha$-\textit{cocycle}.
\end{definition}

If $(H, G, \alpha, f)$ is a normalized crossed system then
\cite[Lemma 1.2]{MA}

\begin{equation}\eqlabel{norm2}
f (1, g) = f (g, 1) = 1 \qquad {\rm and} \qquad 1 \triangleright h
= h
\end{equation}
for any $g \in G$ and $h\in H$.

Let $H$ and $G$ be groups, $\alpha : G \rightarrow \Aut (H)$ and
$f : G \times G \rightarrow H$ two maps. Let $H \#_{\alpha}^{f} \,
G : = H\times G$ as a set with a binary operation defined by the
formula:
\begin{equation}\eqlabel{4}
(h_1,\, g_1)\cdot (h_2, \, g_2) : = \bigl( h_1 (g_1 \triangleright
h_2) f(g_1, \, g_2), \, g_1g_2 \bigl)
\end{equation}
for all $h_1$, $h_2 \in H$, $g_1$, $g_2 \in G$. Then \cite[Theorem
1.3]{MA} $\bigl ( H \#_{\alpha}^{f} \, G, \, \cdot \bigl)$ is a
group with the unit $1_{H \#_{\alpha}^{f} \, G} = \bigl( 1 , \,
1\bigl)$ if and only if $(H, G, \alpha, f)$ is a normalized
crossed system. In this case the group $H \#_{\alpha}^{f} \, G$ is
called the \textit{crossed product of $H$ and $G$} associated to
the crossed system $(H, G, \alpha, f)$.

The following \cite[Examples 1.5]{MA} are basic examples of
special cases of a crossed product of two groups.

\begin{examples}
1. Let $H$ and $G$ be two groups and $\alpha$, $f$ be the trivial
maps. Then $\Gamma = (H, G, \alpha, f)$ is a crossed system called
the \textit{trivial crossed system}. The crossed product $H
\#_{\alpha}^{f} \, G = H\times G$ is the direct product of $H$ and
$G$.
\newline 2. Let $H$ and $G$ be two groups and $f: G\times G \rightarrow H$ the
trivial map. Then $(H, G, \alpha, f)$ is a crossed system if and
only if $\alpha : G \rightarrow \Aut (H)$ is a morphism of groups.
In this case the crossed product $H \#_{\alpha}^{f} \, G = H
\ltimes_{\alpha} G$, the semidirect product of $H$ and $G$.
\newline 3. Let $H$ and $G$ be two groups and $\alpha : G
\rightarrow \Aut (H)$ the trivial action. Then $(H, G, \alpha, f)$
is a crossed system if and only if $\im (f) \subseteq Z(H)$ and
\begin{equation}\eqlabel{CCpropiu}
f(g_1, \, g_2) f(g_1g_2, \, g_3) = f(g_2, \, g_3) f(g_1, \,
g_2g_3)
\end{equation}
for all $g_1$, $g_2$, $g_3 \in G$, that is $f : G\times G
\rightarrow Z(H)$ is a $2$-cocycle. The crossed product $H
\#_{\alpha}^{f} \, G$ associated to this crossed system will be
denoted by $H\times^{f} \, G$ and was called in \cite{MA} the
\textit{twisted product} of $H$ and $G$ associated to the
$2$-cocycle $f: G\times G \rightarrow Z(H)$. Explicitly, the
multiplication of a twisted product of groups $H\times^{f} \, G$
is given by the formula:
\begin{equation}\eqlabel{tw4}
(h_1,\, g_1)\cdot (h_2, \, g_2) : = \bigl( h_1 h_2 f(g_1, \, g_2),
\, g_1g_2 \bigl)
\end{equation}
for all $h_1$, $h_2 \in H$, $g_1$, $g_2 \in G$.
\end{examples}

The next well known theorem is the main application of the crossed
product construction: it is a reconstruction theorem of a group
from a normal subgroup and the quotient.

\begin{theorem}\thlabel{4}
Let $E$ be a group $H\unlhd E$ be a normal subgroup of $E$ and $G
: = E/ H$ be the quotient of $E$ by $H$. Then there exists two
maps $\alpha : G \rightarrow \Aut (H)$ and $f : G \times G
\rightarrow H$ such that $(H, G, \alpha, f)$ is a normalized
crossed system and $ E \cong H \#_{\alpha}^{f} \, G$ (isomorphism
of groups).
\end{theorem}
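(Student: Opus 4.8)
The plan is to reconstruct the crossed system directly from a set-theoretic section of the quotient map. First I would choose a section $s : G \to E$ of the canonical projection $\pi : E \to G = E/H$, normalized so that $s(1) = 1_E$. Since $H$ is normal in $E$, for each $g \in G$ the automorphism of $E$ given by conjugation by $s(g)$ restricts to an automorphism of $H$; this defines the weak action by setting
\begin{equation}\eqlabel{planaction}
g \triangleright h := s(g)\, h\, s(g)^{-1}
\end{equation}
for all $g \in G$ and $h \in H$. The cocycle $f$ measures the failure of $s$ to be a group homomorphism: because $\pi(s(g_1)s(g_2)) = g_1 g_2 = \pi(s(g_1g_2))$, the element $s(g_1)s(g_2)s(g_1g_2)^{-1}$ lies in $\Ker \pi = H$, so I would define
\begin{equation}\eqlabel{plancocycle}
f(g_1, g_2) := s(g_1)\, s(g_2)\, s(g_1 g_2)^{-1} \in H.
\end{equation}

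Next I would verify that the pair $(\alpha, f)$ so defined satisfies the two compatibility conditions \equref{WA} and \equref{CC}. Both reduce to straightforward manipulations inside $E$: condition \equref{WA} follows by comparing $s(g_1)(s(g_2) h s(g_2)^{-1})s(g_1)^{-1}$ with the conjugate of $(g_1g_2)\triangleright h$ by $f(g_1,g_2)$, using the definition \equref{plancocycle} to rewrite $s(g_1)s(g_2)$ as $f(g_1,g_2)s(g_1g_2)$; condition \equref{CC} is exactly the associativity-type identity obtained by expanding $s(g_1)s(g_2)s(g_3)$ in two different ways and cancelling. Normalization $f(1,1)=1$ is immediate from $s(1)=1_E$. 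This shows $(H, G, \alpha, f)$ is a normalized crossed system.

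Finally I would construct the isomorphism $\varphi : H \#_{\alpha}^{f} G \to E$ by
\begin{equation}\eqlabel{planiso}
\varphi(h, g) := h\, s(g).
\end{equation}
That $\varphi$ is multiplicative is a direct check against the crossed-product multiplication \equref{4}: expanding $\varphi(h_1,g_1)\varphi(h_2,g_2) = h_1 s(g_1) h_2 s(g_2)$ and inserting $s(g_1)^{-1}s(g_1)$ recovers $h_1(g_1\triangleright h_2)f(g_1,g_2)s(g_1g_2)$, which is $\varphi$ of the product. Bijectivity follows from the fact that every element $e\in E$ can be written uniquely as $e = h\,s(\pi(e))$ with $h = e\,s(\pi(e))^{-1} \in H$, so $\varphi$ has an explicit inverse.

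I expect the main obstacle to be purely bookkeeping rather than conceptual: the verification of \equref{WA} and \equref{CC} requires care in tracking the order of factors and the placement of inverses in the noncommutative group $E$, since the action $\alpha$ need not be a homomorphism and $f$ need not take central values. Everything else — the normalization, and the multiplicativity and bijectivity of $\varphi$ — reduces to short computations that hinge only on the defining relation \equref{plancocycle} between $s(g_1)s(g_2)$ and $f(g_1,g_2)s(g_1g_2)$.
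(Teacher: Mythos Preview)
Your proposal is correct and is precisely the standard section-based construction: the paper does not give its own argument here but defers to \cite{agore}, \cite[Theorem 1.6]{MA} and \cite{Weibel}, and the proof found in those references proceeds exactly as you outline (choose a normalized section $s$, set $g\triangleright h = s(g)hs(g)^{-1}$ and $f(g_1,g_2)=s(g_1)s(g_2)s(g_1g_2)^{-1}$, verify \equref{WA}--\equref{CC}, and check that $(h,g)\mapsto h\,s(g)$ is an isomorphism). There is nothing to add.
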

For complete proofs and further details we refer to \cite{agore},
\cite[Theorem 1.6]{MA} or \cite{Weibel}.

\hspace{1cm}
\section{\textbf{Crossed product of cyclic groups}}\selabel{2}
\hspace{1cm}

Our purpose in this section is to describe using generators and
relations all crossed products between cyclic groups. As mentioned
in the introduction, the first important result in literature for
the first part of the extension problem was proved by H\"{o}lder
himself \cite[Theorem 12.9]{grillet}. It describes the crossed
product of two finite cyclic groups: for the sake of completeness
we present bellow a short proof of this theorem.

\begin{theorem}\thlabel{thmHolder}
\textbf{(Holder)} A finite group $E$ is isomorphic to a crossed
product $C_n \#_{\alpha}^{f} \, C_m$ if and only if $E$ is the
group generated by two generators $a$ and $b$ subject to the
relations
\begin{equation}\eqlabel{relatiicicice1}
a^n = 1, \qquad b^m = a^i, \qquad b^{-1} a b = a^j
\end{equation}
where $i$, $j \in \{0\,,\, 1, \cdots ,\, n-1 \}$ such that
\begin{equation}\eqlabel{relatiicicice2}
i(j-1) \equiv 0 ({\rm mod} \, n), \qquad j^m \equiv 1 ({\rm mod}
\, n)
\end{equation}
We denote this group by $C_n \#_{i}^{j} \, C_m$.
\end{theorem}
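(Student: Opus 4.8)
The plan is to prove the two implications separately, using the multiplication rule \equref{4} together with the normalization identities \equref{norm2}.

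For the forward implication I would take $E = C_n \#_{\alpha}^{f} \, C_m$ with $C_n = \langle a \rangle$ and $C_m = \langle c \rangle$, and identify $a$ with $(a,1)$ and $b$ with $(1,c)$ in $E$; these two elements generate $E$ since, as a set, $E = C_n \times C_m$. Because $1 \triangleright h = h$ and $f(1,g) = f(g,1) = 1$ by \equref{norm2}, an easy induction on \equref{4} gives $(a,1)^k = (a^k,1)$, so $a^n = (a^n,1) = (1,1)$, the first relation. Likewise the $C_m$-component of $(1,c)^m$ is $c^m = 1$, so $(1,c)^m = (w,1)$ for some $w \in C_n$, and writing $w = a^i$ yields $b^m = a^i$. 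Finally, $C_n \times \{1\}$ is the kernel of the projection onto $C_m$, hence normal in $E$, so conjugation by $b$ restricts to an automorphism of $C_n$ and $b^{-1} a b = a^j$ for a unique $j \in \{0,\dots,n-1\}$ with $\gcd(j,n)=1$.

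The two congruences \equref{relatiicicice2} now fall out of $b^m = a^i$. Since $b^m$ is a power of $a$ it commutes with $a$, so $a = b^{-m} a b^{m} = a^{j^m}$ (iterating $b^{-1}ab = a^j$), whence $j^m \equiv 1 \ ({\rm mod}\, n)$. And since conjugation by $b$ fixes $b^m = a^i$, we get $a^i = b^{-1} a^i b = (b^{-1}ab)^i = a^{ij}$, i.e. $i(j-1) \equiv 0 \ ({\rm mod}\, n)$.

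For the converse, let $E$ be the abstract group presented by \equref{relatiicicice1}--\equref{relatiicicice2}. First I would establish a normal form: using $ab = ba^{j}$ every word can be rewritten as $a^s b^t$ (push every $b$ to the right), then $a^n = 1$ and $b^m = a^i$ let one take $0 \le s < n$ and $0 \le t < m$, so $|E| \le nm$. Next I would exhibit the crossed system. Define $\alpha : C_m \to \Aut(C_n)$ by $c \triangleright a = a^{j'}$ with $j'j \equiv 1 \ ({\rm mod}\, n)$ (the inverse is taken so that conjugation by the image of $c$ in the crossed product reproduces $b^{-1}ab = a^j$ rather than its inverse), extended by $c^t \triangleright a = a^{(j')^t}$; this is a well-defined group homomorphism precisely because $j^m \equiv 1$, and since $C_n$ is abelian the weak-action axiom \equref{WA} is then automatic. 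Define $f(c^s,c^t) = a^{i \lfloor (s+t)/m \rfloor}$ for $0 \le s,t \le m-1$; it is normalized. Setting $E' := C_n \#_{\alpha}^{f} \, C_m$ (of order $nm$), the forward computation shows $(a,1)$ and $(1,c)$ satisfy exactly the relations \equref{relatiicicice1}, so von Dyck's theorem gives a surjection $E \twoheadrightarrow E'$; comparing orders, $nm = |E'| \le |E| \le nm$ forces it to be an isomorphism, and $E \cong C_n \#_{\alpha}^{f} \, C_m$.

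The main obstacle is the verification of the cocycle condition \equref{CC}, and this is exactly where both congruences in \equref{relatiicicice2} are consumed: $j^m \equiv 1$ makes $\alpha$ a genuine action, while $i(j-1) \equiv 0$ forces $a^i$ to be $\alpha$-invariant, so that $g_1 \triangleright f(g_2,g_3) = f(g_2,g_3)$ and \equref{CC} collapses to the ordinary $2$-cocycle identity $f(g_1,g_2)f(g_1g_2,g_3) = f(g_2,g_3)f(g_1,g_2g_3)$. The latter then reduces to the floor identity $\lfloor (s_1+s_2)/m \rfloor + \lfloor ((s_1+s_2 \ {\rm mod}\ m)+s_3)/m \rfloor = \lfloor (s_1+s_2+s_3)/m \rfloor$, so both sides have exponent $i\lfloor (s_1+s_2+s_3)/m \rfloor$ and agree.
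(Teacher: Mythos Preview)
Your proof is correct, and the forward direction is essentially the paper's argument (derive the three relations from normality of $C_n\times\{1\}$ and the shape of $b^m$, then extract the two congruences by conjugating $a^i$ and iterating $b^{-1}ab=a^j$).

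The converse, however, is a genuinely different route. The paper argues abstractly: it checks $C_n\unlhd E$ by conjugating with the generators, asserts $|E|=mn$, reads off $E/C_n\simeq C_m$, and then invokes the reconstruction theorem (\thref{4}) as a black box to conclude that $E$ is \emph{some} crossed product. You instead build the crossed system by hand: you write down the specific weak action $c\triangleright a=a^{j'}$ and the specific cocycle $f(c^s,c^t)=a^{\,i\lfloor(s+t)/m\rfloor}$, verify \equref{WA}--\equref{CC}, and then sandwich $|E|$ between $nm$ (surjection onto $E'$ via von Dyck) and $nm$ (normal form). This buys two things. First, it is self-contained: you never need \thref{4}. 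Second, your order argument is actually cleaner than the paper's---the paper writes ``$|E|=mn$'' after the normal form, but the normal form only gives $|E|\le mn$; the missing lower bound is exactly what your surjection onto the explicitly constructed $E'$ supplies. The price you pay is the cocycle verification, but you identify correctly that $i(j-1)\equiv 0$ makes $a^i$ $\alpha$-invariant, so \equref{CC} collapses to the abelian $2$-cocycle identity and then to the floor identity, and $j^m\equiv 1$ makes $\alpha$ a genuine action so that \equref{WA} (trivialized by the commutativity of $C_n$) holds.
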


\begin{proof}
Assume first that E is isomorphic to a crossed product $C_n
\#_{\alpha}^{f} \, C_m$. Hence $C_n\unlhd E$ and $E/C_n \simeq
C_m$. It follows that $C_{n}=<a\, |\, a^{n}=1> \unlhd  E$ and
there exists $b\in E$ such that $E/ C_n = \{C_n\,,\, bC_n\,,...,
\,b^{m-1}C_n\}$ and $b^{m}\in C_n$. That is, there exists $i\in
\{0\,,\,1\,,...,\, n-1\}$ such that:
\begin{equation}\eqlabel{ci1}
b^{m}=a^{i}
\end{equation}
Since $C_n\unlhd E$ we obtain that $b^{-1}ab \in C_n$ and so there
exists $j \in \{0\,,\,1\,..., \,n-1\}$ such that:
\begin{equation}\eqlabel{ci2}
b^{-1}ab=a^{j}
\end{equation}
A direct computation shows that:
$$
b^{-1}a^{i}b\stackrel{\equref{ci1}}
{=}b^{-1}b^{m}b=b^{m}\stackrel{\equref{ci1}} {=}a^{i} \quad {\rm
and} \quad b^{-1}a^{i}b\stackrel{\equref{ci2}} {=}a^{ij}
$$
It follows from here that $a^{i(j-1)}=1$ and so $i(j-1) \equiv 0
({\rm mod} \, n)$. In a similar way we obtain:
$$
b^{-m}ab^{m}\stackrel{\equref{ci1}} {=}a^{-i}aa^{i}=a \quad {\rm
and} \quad a^{j^{2}}\stackrel{\equref{ci2}}
{=}(b^{-1}ab)^{j}=b^{-1}a^{j}b\stackrel{\equref{ci2}}
{=}b^{2}ab^{2}
$$
and by induction : $b^{-m}ab^{m}=a^{j^{m}}$. Hence $a=a^{j^{m}}$,
that is $j^m \equiv 1 ({\rm mod} \, n)$.

Conversely, assume that relations \equref{relatiicicice1} and
\equref{relatiicicice2} hold. We need to show that $C_n\unlhd E$,
that is $xa^{t}x^{-1} \in C_n$ for every $x \in E$ and $t \in
\{0\,,\,1\,,...,\, n-1\}$. Since $x \in E$ we have
$x=x_1x_2...x_k$ where $k \in \NN$, $x_s\in
\{a\,,\,b\,,\,a^{-1}\,,\,b^{-1}\}$ and $s \in
\{0\,,\,1\,,...,\,k\}$. We obtain that
$ga^{t}g^{-1}=x_1x_2...x_ka^{t}(x_k)^{-1}...(x_1)^{-1}$. It is
easy to see by a direct computation that $x_ka^{t}(x_k)^{-1} \in
C_n$ for every $x_k \in \{a\,,\,b\,,\,a^{-1}\,,\,b^{-1}\}$ and so,
by induction it follows that $ga^{t}g^{-1} \in C_n$. Hence
$C_n\unlhd E$. In a similar way, it can be showed that every
element of the group E can be written as $a^{p}b^{q}$ for $p,q \in
\ZZ$. Hence $|E|=mn$ and so $|E/C_n|=m$, $E/C_n=\{C_n\,,\,C_n
b\,,...,\,C_n b^{m-1}\}$ that is, the group $E$ has a normal
subgroup $C_n$ and $E/C_n\simeq C_m$. By \thref{4}, there exists
$(C_n,C_m,\alpha, f)$ a crossed system such that $E\simeq C_n
\#_{\alpha}^{f} \, C_m$.
\end{proof}

\begin{theorem}\thlabel{finitvsinfinit}
A group E is isomorphic to a crossed product $C_{n}
\#_{\alpha}^{f} \, C_{g}$ if and only if there exists $t\in\ZZ,
(t,n)=1$ such that $E\simeq <a, \, g \, | \, a^{n}=1, \, g^{-1}ag
= a^{t}>$.
\end{theorem}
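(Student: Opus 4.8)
The plan is to follow the template of H\"older's theorem (\thref{thmHolder}), with the finite cyclic group $C_m$ replaced by the infinite cyclic group $C_g$; the essential new features are that the coefficient $t$ is forced to be a unit modulo $n$ (rather than merely a root of unity), and that no relation constrains any power of $g$. For the direct implication, suppose $E \cong C_n \#_{\alpha}^{f} \, C_g$, so that $C_n \unlhd E$ and $E/C_n \cong C_g$. First I would fix a generator $a$ of $C_n = \langle a \mid a^n = 1\rangle$ and choose $g \in E$ whose image generates the infinite cyclic quotient, so that the cosets $\{g^q C_n : q \in \ZZ\}$ are pairwise distinct. Since $C_n \unlhd E$, conjugation by $g$ restricts to an injective endomorphism of the finite group $C_n$, hence to an automorphism; as the automorphisms of $C_n$ are exactly the maps $a \mapsto a^t$ with $(t,n)=1$, I obtain $g^{-1} a g = a^t$ with $(t,n) = 1$. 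Moving every occurrence of $a$ to the left by this relation shows each element of $E$ can be written as $a^p g^q$ with $0 \le p < n$ and $q \in \ZZ$, and distinctness of the cosets together with $|C_n| = n$ makes the expression unique. Hence the canonical surjection from $\langle a, g \mid a^n = 1,\ g^{-1} a g = a^t\rangle$ onto $E$ is a bijection on normal forms, i.e.\ an isomorphism.

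For the converse, let $E = \langle a, g \mid a^n = 1,\ g^{-1} a g = a^t \rangle$ with $(t,n) = 1$; I would verify the hypotheses of \thref{4}. To see that $C_n := \langle a\rangle \unlhd E$ it suffices, as in H\"older's proof, to check that conjugation by each of $a^{\pm 1}, g^{\pm 1}$ sends a power of $a$ to a power of $a$: the cases $a^{\pm1}$ are trivial, $g^{-1} a^s g = a^{ts}$, and, choosing $t'$ with $tt' \equiv 1 \pmod n$, the identity $g^{-1} a^{t'} g = a^{tt'} = a$ yields $g a^s g^{-1} = a^{t's} \in C_n$. Next, the assignment $a \mapsto 0$, $g \mapsto 1$ respects the relations and so defines a surjective homomorphism $E \to \ZZ$ with $C_n$ in its kernel; since $E/C_n$ is cyclic, generated by the image of $g$, and surjects onto the infinite group $\ZZ$, it must itself be infinite cyclic, so $E/C_n \cong C_g$. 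Comparing $E$ with the semidirect product $C_n \rtimes \ZZ$ realising $a \mapsto a^t$ confirms that $a$ has order exactly $n$, so indeed $|C_n| = n$. With $C_n \unlhd E$ and $E/C_n \cong C_g$ in hand, \thref{4} supplies a normalized crossed system with $E \cong C_n \#_{\alpha}^{f} \, C_g$.

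The step I expect to be the main obstacle is controlling the quotient in the converse direction: the relations impose $a^n=1$ and the commutation rule but say nothing directly about the order of $g$, so one must rule out any hidden collapse that would make $E/C_n$ finite or would force $a$ to have order smaller than $n$. The hypothesis $(t,n)=1$ is exactly what resolves this — it is used to invert the conjugation automorphism (producing $g a g^{-1} = a^{t'}$, and hence the normality of $C_n$), while the auxiliary homomorphism $E \to \ZZ$ is the clean device certifying that $g$ has infinite order modulo $C_n$.
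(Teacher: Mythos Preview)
Your proof is correct and follows the same overall architecture as the paper's: in the forward direction, extract the conjugation relation $g^{-1}ag=a^{t}$ from normality and argue that $(t,n)=1$; in the converse, verify $C_{n}\unlhd E$ by checking conjugation by the generators (using an inverse $t'$ of $t$ mod $n$ for $g a g^{-1}$), show $E/C_{n}$ is infinite cyclic, and invoke \thref{4}. The paper deduces $(t,n)=1$ by a direct contradiction (if $d=(t,n)>1$ then $g^{-1}a^{n/d}g=1$ forces $a^{n/d}=1$), while you appeal to the classification of $\Aut(C_{n})$; these are the same argument in different clothing.

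Where your write-up actually goes beyond the paper is in the converse. The paper, after obtaining $g^{(\alpha-\beta)n}=1$, concludes with ``which is a contradiction with $C_{g}$ being an infinite cyclic group'' --- but that is precisely what is being proved, so one still owes an argument that $g$ has infinite order in the presented group. Your homomorphism $E\to\ZZ$, $a\mapsto 0$, $g\mapsto 1$, supplies this missing certificate cleanly, and your comparison with the concrete semidirect product $C_{n}\rtimes\ZZ$ likewise pins down that $a$ has order exactly $n$, another point the paper leaves implicit. These are the right devices and they plug real gaps; the rest of your outline matches the paper step for step.
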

\begin{proof}
Suppose first that $E\simeq C_{n} \#_{\alpha}^{f} \, C_{g}$. Hence
$C_{n}\unlhd E$ and $E/C_{n}\simeq C_{g}$. That is
$E/C_{n}=\{g^{k}C_{n} \,| \,k \in \ZZ\}$. Since $C_{n}\unlhd E$ we
obtain that $C_{n}=<a \,| \,a^{n}=1>\subseteq E$ and $g^{-1}ag \in
C_{n}$. That is, there exists $t \in \{0 \,,1 \,,...,n-1\}$ such
that
\begin{equation}\eqlabel{fvsinf}
g^{-1}ag=a^{t}
\end{equation}
Suppose now that $(t,n)=d>1$. It follows from here that there
exist $t_{1}, n_{1} \in N$ such that $t=dt_{1}$, $n=dn_{1}$ and
$(t_{1},n_{1})=1$. From \equref{fvsinf} we obtain
$g^{-1}a^{n_{1}}g=a^{nt_{1}}=1$, that is $a^{n_{1}}=1$, which is a
contradiction with $a$ having order $n$ and $n_{1}<n$. Hence
$(t,n)=1$ and $E\simeq <a \,,g \,| \, a^{n}=1 \, , \,
g^{-1}ag=a^{t}>$.

Now let $E\simeq <a\, ,g \, | \, a^{n}=1 \, , \, g^{-1}ag=a^{t}>$
for some $t\in\ZZ, (t,n)=1$. By \thref{4} we only need to prove
that $C_{n}\unlhd E$ and $E/C_{n}\simeq C_{g}$. For any $g' \in E$
we have $g'=x_{1}x_{2}...x_{k}$, for some $k \in \NN$, $x_{i} \in
\{a \, , \, g \, , \, a^{-1} \, , \, g^{-1}\}$, $i \in \{1 \, , \,
2 \, ,..., \, k\}$. That is, to prove that $C_{n}\unlhd E$ we only
need to show that $g^{-1}a^{l}g \in C_{n}$ and $ga^{l}g^{-1} \in
C_{n}$ for any $l \in \ZZ$. From \equref{fvsinf} we obtain, by
induction, that $g^{-1}a^{l}g=a^{tl} \in C_{n}$. Since $(t,n)=1$
there exist $\alpha, \beta \in \ZZ$ such that $\alpha t+\beta
n=1$. We obtain from \equref{fvsinf} that $a=ga^{t}g^{-1}$ and it
follows from here that $a^{\alpha}=ga^{\alpha t}g^{-1}$. Since
$ga^{\beta n}g^{-1}=1$ we obtain that $ga^{\alpha t+\beta
n}g^{-1}=a^{\alpha}$, that is $gag^{-1}=a^{\alpha}$. It follows
from here that $ga^{l}g^{-1}=a^{\alpha l}$ for any $l \in \ZZ$.
Hence $C_{n}\unlhd E$. It follows by a simple calculation that
every element $g' \in E$ can be written as $g^{p}a^{q}$ for some
$p, q \in \ZZ$. That is $gC_{n}=g^{p}a^{q}C_{n}=^{p}C_{n}$. Hence
$E/C_{n}\subseteq C_{g}$. Now suppose that there exist $\alpha,
\beta \in \ZZ$, $\alpha\neq \beta$ such that
$g^{\alpha}C_{n}=g^{\beta}C_{n}$, that is $g^{\alpha
-\beta}=a^{\gamma}$ for some $\gamma \in \{0 \, , \, 1 \, ,..., \,
n-1\}$. It follows from here that $g^{(\alpha- \beta)n}=a^{\gamma
n}=1$ which is a contradiction with $C_{g}$ being an infinite
cyclic group. Hence $E/C_{n}\simeq C_{g}$.
\end{proof}

\begin{theorem}\thlabel{infinitvsfinit}
A group E is isomorphic to a crossed product $C_{g}
\#_{\alpha}^{f} \, C_{n}$ if and only if :
\begin{enumerate}
\item[(i)]$E\simeq <g, \, h \, | \, gh = hg, \, h^{n} = g^{t},
t\in \ZZ>$ for n odd; \item[(ii)]$E\simeq <g, \, h \, | \, gh =
hg, h^{n} = g^{t}, t\in \ZZ>$ or $E\simeq <g,\, h \, | \, h^{n} =
1, ghg = h >$ for n even.
\end{enumerate}
\end{theorem}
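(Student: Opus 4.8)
The plan is to reduce the statement to the extension problem via the reconstruction theorem \thref{4}. For the "only if" direction, if $E \simeq C_{g} \#_{\alpha}^{f} \, C_{n}$ then by construction $C_{g} \unlhd E$ sits inside as $C_g \times \{1\}$ and $E/C_{g} \simeq C_{n}$; for the "if" direction, once we know a candidate group $E$ contains a normal subgroup isomorphic to $C_{g}$ with quotient $C_{n}$, \thref{4} produces the required crossed system. So it suffices to classify, up to isomorphism, those groups $E$ containing a normal \emph{infinite} cyclic subgroup $N = \langle g\rangle \simeq C_{g}$ with $E/N \simeq C_{n}$. I fix a lift $h \in E$ of a generator of $E/N \simeq C_{n}$, so that $h^{n} \in N$ and the class of $h$ has order exactly $n$ modulo $N$ (i.e. $h^{k} \in N \iff n \mid k$).

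The decisive structural fact, which is what makes the infinite case cleaner than H\"{o}lder's finite case (\thref{thmHolder}), is that $\Aut (C_{g}) = \Aut (\ZZ) = \{\pm 1\}$. Since $N$ is abelian and normal, conjugation descends to a genuine group homomorphism $E/N \simeq C_{n} \to \Aut (C_{g}) \cong \ZZ/2\ZZ$; write $h^{-1} g h = g^{\varepsilon}$ with $\varepsilon \in \{1,-1\}$, so that $\varepsilon^{n} = 1$. This is exactly where the parity enters: for $n$ odd the only homomorphism is trivial, forcing $\varepsilon = 1$, whereas for $n$ even both values $\varepsilon = \pm 1$ are admissible. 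Writing $h^{n} = g^{t}$ for some $t \in \ZZ$, I then split into two cases. If $\varepsilon = 1$ then $g$ and $h$ commute and $t$ is unconstrained, giving $E \simeq \langle g, h \mid gh = hg,\ h^{n} = g^{t}\rangle$. If $\varepsilon = -1$ (possible only for $n$ even), I compute $h^{-1} g^{t} h = g^{-t}$ on one hand, while $h^{-1} h^{n} h = h^{n} = g^{t}$ on the other; comparing forces $g^{2t} = 1$, hence $t = 0$ since $g$ has infinite order, so $h^{n} = 1$. The relation $h^{-1} g h = g^{-1}$ rewrites as $ghg = h$, yielding $E \simeq \langle g, h \mid h^{n} = 1,\ ghg = h\rangle$. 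These are precisely the presentations listed in (i) and (ii).

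For the converse I have to check that each abstract presentation really does define a group of the required type, so that \thref{4} applies: namely that $g$ retains infinite order, that $\langle g\rangle$ is normal, and that $E/\langle g\rangle$ has order \emph{exactly} $n$ rather than a proper quotient. In the commuting case $E$ is the abelian group $\ZZ^{2}/\langle(t,-n)\rangle$, in which $\langle g\rangle$ is visibly infinite and central with quotient $C_{n}$; in the case $\varepsilon=-1$ the group is the semidirect product $\ZZ \rtimes C_{n}$ with $C_{n}$ acting through its order-two quotient by inversion (an infinite-dihedral-type group), which has the same two properties. I expect the main obstacle to be this bookkeeping in the converse — verifying that the relations do not collapse $g$ to finite order and that $h$ has order exactly $n$ modulo $\langle g\rangle$ — together with the clean justification that the conjugation action factors through a homomorphism $C_{n} \to \Aut(C_{g})$, which is what pins down the parity dichotomy and forces $t = 0$ when $\varepsilon = -1$.
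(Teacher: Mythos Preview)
Your argument is correct and follows essentially the same route as the paper: both directions hinge on writing $h^{-1}gh=g^{\varepsilon}$ with $\varepsilon\in\{\pm1\}$ (the paper derives $s^{n}=1$ by iterated conjugation, you phrase it as a homomorphism $C_{n}\to\Aut(C_{g})\cong\{\pm1\}$), then using $h^{-1}g^{t}h=g^{\varepsilon t}=g^{t}$ to force $t=0$ when $\varepsilon=-1$, and finally checking in the converse that each presentation really has $\langle g\rangle$ infinite, normal, with quotient exactly $C_{n}$. Your converse is packaged more structurally (identifying the groups as $\ZZ^{2}/\langle(t,-n)\rangle$ and $\ZZ\rtimes C_{n}$) where the paper verifies normality and the quotient by hand, but the content is the same.
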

\begin{proof}
 Suppose first that $E\simeq C_{g} \#_{\alpha}^{f}
\, C_{n}$. Hence $C_{g}\unlhd E$ and $E/C_{g}\simeq C_{n}$. That
is, there exists $h \in E$ such that $E/C_{g}=\{C_{g} \, , \,
hC_{g} \, ,..., \, h^{n-1}C_{g}\}$ and $h^{n} \in C_{g}$. Hence
there exists $t \in \ZZ$ such that $h^{n}=g^{t}$. Since
$C_{g}\unlhd E$ we obtain $h^{-1}gh \in C_{g}$, that is
$h^{-1}gh=g^{s}$ for some $s \in \ZZ$. It follows that
$h^{-1}g^{t}h=g^{st}$ and using $h^{n}=g^{t}$ we obtain
$g^{ts}=g^{t}$ that is $g^{t(s-1)}=1$. Since $C_{g}$ is a infinite
cyclic group we must have $t(s-1)=0$. Using again $h^{-1}gh=g^{s}$
we obtain $h^{-1}g^{s}h=g^{s^{2}}$, that is
$h^{-2}gh^{2}=g^{s^{2}}$ and by induction
$h^{-n}gh^{n}=g^{s^{n}}$. Thus, from $h^{n}=g^{t}$ we obtain
$g^{s^{n}-1}=1$ and since $C_{g}$ is an infinite cyclic group we
must have $s^{n}=1$. Therefore if $n$ is odd $E\simeq <g \, , \, h
\, | \, gh=hg \, , \,  h^{n}=g^{t}>$ for some $t\in \ZZ$ and if
$n$ is even $E\simeq <g \, , \, h \, | \, gh=hg \,, \,
h^{n}=g^{t}>$ for some $t\in \ZZ$ or $E\simeq <g \, , \, h \, | \,
h^{n}=1 \, , \, ghg=h>$.

We assume now that $E\simeq <g \, , \, h \, | \, gh=hg \, , \,
h^{n}=g^{t} \, , \, t\in \ZZ
>$. Since $E$ is abelian $C_{g}\unlhd E$. $E/C_{g}=\{g'C_{g}\, | \, g' \in
E\}$ and since every element $g' \in E$ can be written as
$g'=h^{p}g^{q}$ we obtain that
$g'C_{g}=h^{p}g^{q}C_{g}=h^{p}C_{g}$ that is $E/C_{g}\subseteq
\{C_{g} \, , \, hC_{g} \, , \,..., \, h^{n-1}C_{g}\}\simeq C_{n}$.
Now suppose that there exists $\alpha, \beta \in \{0 \, , \, 1 \,
,..., \, n-1\}$, $\alpha>\beta$, such that
$h^{\alpha}C_{g}=h^{\beta}C_{g}$ that is
$h^{\alpha-\beta}=g^{\gamma}$ for some $\gamma \in \ZZ$. Since
$\alpha-\beta < n$ we obtain a contradiction with $h^{n}=g^{t}$.
Hence $E/C_{g}\simeq C_{n}$ and by \thref{4} there exists
$(C_g,C_n,\alpha, f)$ a crossed system such that $E\simeq C_g
\#_{\alpha}^{f} \, C_n$.

Suppose now that $n$ is even and $E\simeq <g \, , \, h \, | \,
h^{n}=1 \, , \, ghg=h>$. By \thref{4} we only need to prove that
$C_{g}\unlhd E$ and $E/C_{g}\simeq C_{n}$. For any $g' \in E$ we
have $g'=x_{1}x_{2}...x_{k}$ for some $k \in \NN$, $x_{i} \in \{g
\, , \, h \, , \, g^{-1} \, , \, h^{-1}\}$ and $i \in \{1 \, , \,
2 \, ,..., \, k\}$. That is, to prove that $C_g\unlhd E$ we only
need to show that $hg^{l}h^{-1} \in C_{g}$ and $h^{-1}g^{l}h \in
C_{g}$ for any $l \in \ZZ$. Since $h^{-1}gh=g^{-1}$ we obtain, by
induction, that $h^{-1}g^{l}h=g^{-l} \in C_{g}$. Also from
$hg^{l}h^{-1}=h^{-n+1}g^{l}h^{n-1}=(h^{-1})^{n-1}g^{l}h^{n-1}=(h^{-1})^{n-2}g^{-l}h^{n-2}$
we obtain by induction $hg^{l}h^{-1}=g^{-l} \in C_{g}$. Hence
$C_{g}\unlhd E$ \,.$E/C_{g}=\{g'C_{g}\,|\,g' \in E \}$ and since
any element $g' \in E$ can be written as $h^{p}g^{q}$ for some
$p,q \in \ZZ$ it follows from here that
$g'C_{g}=h^{p}g^{q}C_{g}=h^{p}C_{g}$. Hence $E/C_{g}\subseteq
C_{n}$. Now suppose that there exist $\alpha, \beta \in
\{0\,,\,1\,,...,\,n-1\}$, $\alpha> \beta$ such that
$h^{\alpha}C_{g}=h^{\beta}C_{g}$, that is
$h^{\alpha-\beta}=g^{\gamma}$ for some $\gamma \in \ZZ$. It
follows from here that $g^{n\gamma}=(h^{n})^{\alpha-\beta}=1$ and
since $C_{g}$ is an infinite cyclic group we must have
$n\gamma=0$, that is $\gamma=0$. Hence $h^{\alpha-\beta}=1$ which
is a contradiction since the order of $h$ is $n$ and
$0<\alpha-\beta<n$. Therefore $E/C_{g}=C_{n}$.
\end{proof}

\begin{theorem}\thlabel{cicliceinfinite}
A group E is isomorphic to a crossed product $C_{g_{1}}
\#_{\alpha}^{f} \, C_{g_{2}}$ if and only if $E\simeq <g_{1},
g_{2} \, | \, g_{1}g_{2} = g_{2}g_{1} >$ or $E\simeq <g_{1}, g_{2}
\, | \, g_{1}g_{2}g_{1} = g_{2}>$.
\end{theorem}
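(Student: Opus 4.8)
The plan is to mimic the proofs of \thref{finitvsinfinit} and \thref{infinitvsfinit}. For the forward implication, assume $E\simeq C_{g_1}\#_{\alpha}^{f}\,C_{g_2}$; then $C_{g_1}\unlhd E$ and $E/C_{g_1}\simeq C_{g_2}$, and $E$ is generated by $g_1$ together with a lift $g_2$ of a generator of the quotient. Normality gives $g_2^{-1}g_1g_2=g_1^{s}$ for some $s\in\ZZ$. The decisive point — which replaces the congruences $j^{m}\equiv 1$ and $s^{n}=1$ of the previous theorems — is that conjugation by $g_2$ is an \emph{automorphism} of the infinite cyclic group $C_{g_1}$. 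Since $\Aut(\ZZ)=\{\pm 1\}$, equivalently since $g_2g_1g_2^{-1}=g_1^{s'}$ forces $g_1=g_1^{ss'}$ and hence $ss'=1$, we must have $s\in\{1,-1\}$. There is no analogue of the relations $b^{m}=a^{i}$ or $h^{n}=g^{t}$ here, because the quotient $C_{g_2}$ is infinite, so no nontrivial power of $g_2$ lands in $C_{g_1}$.

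If $s=1$ then $g_1$ and $g_2$ commute, giving the first presentation; if $s=-1$ then $g_2^{-1}g_1g_2=g_1^{-1}$, which rearranges to $g_1g_2g_1=g_2$, giving the second. To promote these relations to an isomorphism with the stated presentation $P$, I would argue as in the earlier proofs: every element of $E$ has a normal form $g_2^{p}g_1^{q}$ (move all $g_1$'s to the right using $g_1g_2=g_2g_1^{\pm1}$), and if $g_2^{p}g_1^{q}=1$ then reducing modulo $C_{g_1}$ forces $p=0$ (the quotient $E/C_{g_1}\simeq C_{g_2}$ is infinite cyclic) and then $q=0$ (since $g_1$ has infinite order). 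Hence the canonical surjection $P\twoheadrightarrow E$ is injective, so $E\simeq P$.

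For the converse I would begin from each presentation and verify the hypotheses of \thref{4}, namely $C_{g_1}=\langle g_1\rangle\unlhd E$ with $E/C_{g_1}\simeq C_{g_2}$. Normality is immediate in the commutative case, and in the case $g_1g_2g_1=g_2$ it follows from $g_2^{-1}g_1g_2=g_1^{-1}$ and $g_2g_1g_2^{-1}=g_1^{-1}$, both of which lie in $C_{g_1}$. In each case the relations let one rewrite an arbitrary word as $g_2^{p}g_1^{q}$, so $E/C_{g_1}$ is cyclic, generated by $g_2C_{g_1}$. To see that this quotient is genuinely infinite and that $g_1$ has infinite order — i.e.\ that the presented group does not collapse — I would realize each presentation by a concrete model: $\ZZ^{2}$ in the commutative case, and the semidirect product $\ZZ\rtimes\ZZ$ with $g_2$ acting on $\langle g_1\rangle\cong\ZZ$ by inversion in the other. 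These models make the normal forms $g_2^{p}g_1^{q}$ pairwise distinct, whence $g_1$ has infinite order and $E/C_{g_1}\simeq C_{g_2}$, so \thref{4} produces the required crossed system. I expect this certification of non-collapse (especially for the Klein bottle group $\ZZ\rtimes\ZZ$) to be the only genuinely delicate point; everything else is a direct transcription of the arguments in \thref{finitvsinfinit} and \thref{infinitvsfinit}.
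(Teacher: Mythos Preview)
Your proposal is correct and follows essentially the same route as the paper: the forward direction hinges on the observation that $g_2^{-1}g_1g_2=g_1^{t}$ and $g_2g_1g_2^{-1}=g_1^{s}$ force $st=1$ (equivalently, $\Aut(\ZZ)=\{\pm1\}$), yielding exactly the two presentations; the converse verifies $C_{g_1}\unlhd E$ and $E/C_{g_1}\simeq C_{g_2}$ and then invokes \thref{4}. Your explicit normal-form argument and the concrete models $\ZZ^{2}$ and $\ZZ\rtimes\ZZ$ for non-collapse are in fact more careful than the paper, which simply asserts the isomorphism in the forward direction and, for the converse, notes that any nontrivial subgroup of an infinite cyclic group is infinite cyclic.
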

\begin{proof}
Suppose first that $E\simeq C_{g_{1}} \#_{\alpha}^{f} \,
C_{g_{2}}$. Hence $C_{g_{1}}\unlhd E$ and $E/C_{g_{1}}\simeq
C_{g_{2}}$. That is $E/C_{g_{1}}=\{g_{2}^{k}C_{g_{1}}\,|\,k \in
\ZZ\}$ Since $C_{g_{1}}\unlhd E$ we obtain that
$g_{2}^{-1}g_{1}g_{2} \in C_{g_{1}}$ and $g_{2}g_{1}g_{2}^{-1} \in
C_{g_{1}}$. That is, there exist $s,t \in \ZZ$ such that
\begin{equation}\label{ciclicinf1}
g_{2}^{-1}g_{1}g_{2}=g_{1}^{t}
\end{equation}
and
\begin{equation}\label{ciclicinf2}
g_{2}g_{1}g_{2}^{-1}=g_{1}^{s}
\end{equation}
From (\ref{ciclicinf1}) we obtain, that
$g_{2}^{-1}g_{1}^{s}g_{2}=g_{1}^{st}$. It follows from here, using
(\ref{ciclicinf2}), that $g_{1}^{st}=g_{1}$. Since $C_{g_{1}}$ is
an infinite cyclic group, we obtain that $st=1$, that is $(s,t)
\in \{(1,1),(-1,-1)\}$. Hence $E\simeq
<g_{1},g_{2}\,|\,g_{1}g_{2}=g_{2}g_{1}>$ or $E\simeq
<g_{1},g_{2}\,|\,g_{1}g_{2}g_{1}=g_{2}>$.

Conversely, if $E\simeq <g_{1},g_{2}\,|\,g_{1}g_{2}=g_{2}g_{1}>$
then it is obvious that $E\simeq \ZZ\times\ZZ\simeq C_{g_1}\#
C_{g_2}$ the crossed system being the trivial one.

Now let $E\simeq <g_{1},g_{2}\,|\,g_{1}g_{2}g_{1}=g_{2}>$. Also by
\thref{4} we only need to prove that $C_{g_{1}}\unlhd E$ and
$E/C_{g_{1}}\simeq C_{g_{2}}$. For any $g \in E$ we have
$g=x_{1}x_{2}...x_{k}$ for $k \in N$, $x_{i} \in
\{g_{1}\,,\,g_{2}\,,\,g_{1}^{-1}\,,\,g_{2}^{-1}\}$ and $i \in
\{1\,,\,2\,,...,\,k\}$. Therefore, to prove $C_{g_{1}}\unlhd E$,
we only need to show that $g_{2}g_{1}^{l}g_{2}^{-1}\in C_{g_{1}}$
and $g_{2}^{-1}g_{1}^{l}g_{2} \in C_{g_{1}}$ for any $l \in \ZZ$.
From $g_{1}g_{2}g_{1}=g_{2}$ we obtain
$g_{2}g_{1}g_{2}^{-1}=g_{1}^{-1}=g_{2}^{-1}g_{1}g_{2}$ and
$g_{2}g_{1}^{l}g_{2}^{-1}=g_{1}^{-l}=g_{2}^{-1}g_{1}^{l}g_{2}$ for
any $l \in \ZZ$. Hence $C_{g_{1}}\unlhd E$. In a similar way it
can be shown that every element g of the group $E$ can be written
as $g_{2}^{p}g_{1}^{q}$ for some $p,q \in \ZZ$. It follows from
here that
$gC_{g_{1}}=g_{2}^{p}g_{1}^{q}C_{g_{1}}=g_{2}^{p}C_{g_{1}}$. Hence
$E/C_{g_{1}}\le C_{g_{2}}$. Since any non trivial subgroup of an
infinite cyclic group is infinite cyclic we obtain that
$E/C_{g_{1}}\simeq C_{g_{2}}$ which finishes the proof.
\end{proof}

\section{\textbf{When is a crossed product a cyclic group?}}\selabel{3}

Our aim in the present section is to give a necessary and
sufficient condition for a crossed product to be a cyclic group.
For this it is necessary that both groups should be cyclic since
any subgroup and any quotient of a cyclic group are cyclic groups.
Hence the problem is reduced to decide which of the crossed
products between two cyclic groups described in the previous
section are cyclic groups and under what conditions.

It is obvious that the crossed product between a finite cyclic
group $C_{n}$ and an infinite cyclic group ${C_{g}}$ described in
 \thref{finitvsinfinit} can not be a cyclic group since an
infinite cyclic group does not have torsion elements. By the same
argument we can conclude that the crossed product $<g\,,\,h\, |\,
h^{n}=1\,,\,ghg=h>$ obtained in \thref{infinitvsfinit} can not be
a cyclic group. Also, the crossed product between the two infinite
cyclic groups described in \thref{cicliceinfinite} can not be a
cyclic group because a nontrivial quotient of an infinite cyclic
group must be finite.

Therefore, the only crossed products left to deal with are : $C_n
\#_{i}^{j} \, C_m$ described in \thref{thmHolder} and $<g\,,\,h\,
|\, gh=hg\,,\, h^{n}=g^{t}\,,\,t\in \ZZ >$ from
\thref{infinitvsfinit}.

In what follows we investigate under which conditions these two
crossed products are cyclic groups.

In order to prove our next result we need the following technical
lemma:

\begin{lemma}
Let $m,n,i$ be rational integers so that $(m,n,i)=1$. Then there
exists some  $u,v,w\in\ZZ$ such that $um+vi+wn=1$ and $(m,v)=1$,
where $(r,s)$ denotes the greatest common divisor of the integers
$r$ and $s$.
\end{lemma}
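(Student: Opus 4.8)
The plan is to reduce the three-variable linear problem to a one-variable congruence and then adjust the remaining free parameter to force coprimality with $m$. Set $d=(m,n)$. Since $(m,n,i)=1$ we have $(d,i)=1$, so $i$ is invertible modulo $d$; I fix an integer $v_0$ with $v_0 i\equiv 1\pmod d$, and observe that this already forces $(v_0,d)=1$. The reason for introducing $d$ is that $\{um+wn:u,w\in\ZZ\}=d\ZZ$, so for a prescribed value of $v$ the equation $um+vi+wn=1$ is solvable in $u,w$ if and only if $d\mid 1-vi$, that is, if and only if $v\equiv v_0\pmod d$.

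Thus the whole statement comes down to the following: among the integers in the arithmetic progression $v_0+d\ZZ$, find one with $(m,v)=1$. Once such a $v$ is in hand, the matching $u,w$ exist by B\'ezout and the triple $(u,v,w)$ is the one we want. I would establish the existence of this $v$ prime by prime and then glue the local choices with the Chinese Remainder Theorem. For a prime $p\mid m$ with $p\mid d$: since $v\equiv v_0\pmod d$ gives $v\equiv v_0\pmod p$, and $(v_0,d)=1$ forces $p\nmid v_0$, every $v$ in the progression is automatically coprime to $p$. For a prime $p\mid m$ with $p\nmid d$: here $d$ is invertible modulo $p$, so the condition $v_0+kd\equiv 0\pmod p$ singles out exactly one residue class of $k$ modulo $p$, which we simply avoid.

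Finally I would assemble the choices: by the Chinese Remainder Theorem there exists $k$ that avoids, for each prime $p\mid m$ with $p\nmid d$, the single forbidden class modulo $p$ (each such $p$ leaves $p-1\ge 1$ admissible residues). For this $k$ the integer $v=v_0+kd$ is divisible by no prime factor of $m$, so $(m,v)=1$, while $v\equiv v_0\pmod d$ keeps $um+vi+wn=1$ solvable. The only delicate point — and the place where the hypothesis $(m,n,i)=1$ is genuinely used, in the sharpened form $(d,i)=1$ — is the treatment of primes dividing both $m$ and $d$: there we cannot shift the residue of $v$ by varying $k$, so coprimality must come for free, and it does precisely because $v_0 i\equiv 1\pmod d$ makes $v_0$ a unit modulo every prime dividing $d$. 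Everything else is routine B\'ezout and CRT bookkeeping.
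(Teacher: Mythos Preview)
Your argument is correct and follows essentially the same route as the paper: reduce to $d=(m,n)$, note that solvability in $u,w$ amounts to $vi\equiv 1\pmod d$, and then use the Chinese Remainder Theorem to pick $v$ in this residue class with $(v,m)=1$. The only cosmetic difference is that the paper packages the primes of $m$ not dividing $d$ into a single auxiliary integer $m'$ and imposes $v\equiv 1\pmod{m'}$ in one CRT step, whereas you handle those primes one at a time; the content is identical.
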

\begin{proof}

Let $d=(m,n)$. Then $(d,i)=1$. Let $m'|m$ be so that  $(m',d)=1$
and $m'd$ contains all the prime factors of $m$. Using the Chinese
Reminder theorem we can find $v\in\ZZ$ such that $d|vi-1$ and
$m'|v-1$ (if $m'=1$ the last condition is trivially fulfilled). We
observe that $(m,v)=1$ because all the prime divisors of $m$ are
in $m'd$ and $(m'd,v)=1$.

Since $d=(m,n)$ there exist $u',w'\in\ZZ$ such that $u'm+w'n=d$.
From the way we chose $v$ it follows that there exists $r\in\ZZ$
such that $vi+rd=1$. Now put $u=ru', w=rw'$. From the above we
have $vi+r(u'm+w'n)=1$, thus $vi+um+wn=1$.
\end{proof}

\begin{proposition}\label{finvsfin=ciclic}
A crossed product $E = C_n \#_{i}^{j} \, C_m $ is a cyclic group
if and only if $j=1$ and $(m,n,i)=1$.
\end{proposition}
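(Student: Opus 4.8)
The plan is to read the group structure directly off \thref{thmHolder}: $E$ is generated by $a,b$ with $a^n=1$, $b^m=a^i$ and $b^{-1}ab=a^j$, it has order $mn$, the subgroup $\langle a\rangle\cong C_n$ is normal, and $E/\langle a\rangle\cong C_m$ is generated by the image $\bar b$ of $b$. Everything below is bookkeeping with these data.

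For the forward implication, suppose $E$ is cyclic. Then $E$ is abelian, so $b^{-1}ab=a$, i.e. $a^j=a$; since $j\in\{0,\dots,n-1\}$ this forces $j=1$. As $E$ is now abelian, every element has the form $a^pb^q$, and a generator $x=a^pb^q$ must project to a generator of $E/\langle a\rangle\cong C_m$, forcing $(q,m)=1$. Moreover $x^m=a^{pm}(b^m)^q=a^{pm+iq}$, and since $x^m$ has order $mn/m=n$ in a cyclic group of order $mn$, the element $a^{pm+iq}$ has order $n$, which gives $(pm+iq,n)=1$. Any common divisor of $m$, $n$ and $i$ then divides both $pm+iq$ and $n$, hence divides $1$; therefore $(m,n,i)=1$.

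For the converse, assume $j=1$ and $(m,n,i)=1$; note that with $j=1$ the constraints \equref{relatiicicice2} hold automatically, so $E$ is the abelian group defined by $a^n=1$, $ab=ba$, $b^m=a^i$. I would produce an explicit generator using the technical Lemma above: choose $u,v,w\in\ZZ$ with $um+vi+wn=1$ and $(m,v)=1$, and set $x:=a^ub^v$. Using $b^m=a^i$ and $a^n=1$ one computes $x^m=a^{um}(b^m)^v=a^{um+iv}=a^{1-wn}=a$, so $\langle a\rangle\subseteq\langle x\rangle$. On the other hand the image of $x$ in $E/\langle a\rangle\cong C_m$ is $\bar b^{\,v}$, which has order $m$ because $(m,v)=1$, so $\langle x\rangle$ surjects onto $C_m$. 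Counting orders, $|\langle x\rangle|=|\langle a\rangle|\cdot m=nm=|E|$, whence $E=\langle x\rangle$ is cyclic.

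The two order computations are routine; the genuine obstacle is the converse, and specifically the need to build a single element that simultaneously generates the ``bottom'' $C_n$ and the ``top'' $C_m$. A plain B\'ezout identity from $(m,n,i)=1$ already yields $u,v,w$ with $um+vi+wn=1$, which forces $x^m=a$, but it gives no control over $(m,v)$; if $v$ shared a factor with $m$, the projection of $x$ would fail to generate $C_m$ and the order count would collapse. This is exactly the point handled by the Lemma, whose extra conclusion $(m,v)=1$ is what makes the counting argument close, and explains why that statement is isolated beforehand.
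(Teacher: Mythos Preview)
Your proof is correct and follows essentially the same route as the paper: both directions hinge on the presentation from \thref{thmHolder}, and the converse in both cases uses the technical Lemma to obtain $u,v,w$ with $um+vi+wn=1$ and $(m,v)=1$, then verifies that $x=a^ub^v$ generates $E$. The only cosmetic differences are that in the forward direction you extract $(m,n,i)=1$ from $(pm+iq,n)=1$ directly, whereas the paper passes through a power $k$ so that $(a^{uk}b^{vk})^m=a$; and in the converse you conclude via an order count through the quotient $E/\langle a\rangle$, whereas the paper explicitly exhibits $b$ as a specific power of $x$.
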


\begin{proof}

We know from \thref{thmHolder} that $E$ has a presentation of the
form $$E=<a\,,\,b\,|\,a^n=1\,,\, b^m=a^i\,,\, b^{-1} a b = a^j>.$$

Suppose first that $E$ is a cyclic group. It follows from here
that $j=1$ since every cyclic group is abelian. If $E$ is cyclic
then there exist some $u,v\in\ZZ$ such that $E=<a^ub^v>$. $a^ub^v$
has order $mn$, hence $(a^ub^v)^m$ has order $n$. It is well known
that in a cyclic group for any divisor of the order of the group
there exists a unique subgroup of that order, thus
$<(a^ub^v)^m>=<a>$ and therefore there exists some $k\in\ZZ$ such
that $(a^{uk}b^{vk})^m=a$. Using the relation $b^m=a^i$ and the
fact that $a$ has order $n$ we obtain that $ukm+vki-1$ is
divisible by $n$, that is $(m,n,i)=1$.

For the converse we will use the previous lemma and we obtain that
there exist $u,v,w\in\ZZ$ such that $um+iv+wn=1$ and $(m,v)=1$. We
will prove that $a^ub^v$ has order $mn$ in $E$ and that finishes
the proof. For this it is enough to prove that $a,b\in<a^ub^v>$.
By a simple calculation we get:
$(a^ub^v)^m=a^{um}a^{vi}=a^{um+vi}=a^{1-wn}=a$, that is
$a\in<a^ub^v>$. Since $(m,v)=1$, there exists $l\in\ZZ$ such that
$m|vl-w$. Now let $k=i+ln$.

Finally $(a^ub^v)^k=(a^ub^v)^i(a^ub^v)^{ln}=b^{um}b^{vi}b^{vln}=
b^{um+vi+vln}=b^{1-wn+vln}=bb^{n(vl-w)}=b$ because $n(vl-w)$ is
divisible by $mn$ and $|E|=mn$. Hence $b\in<a^ub^v>$.
\end{proof}

\begin{proposition}\label{inf-fin=ciclic}
The group $E = <g\,,\,h\,|\,h^n=g^t\,,\, hg=gh>$ where $n\ge
2,t\in\ZZ$ is cyclic if and only if $(n,t)=1$.
\end{proposition}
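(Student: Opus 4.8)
The plan is to exploit that $E$ is abelian and infinite, and to identify it as a direct sum of $\ZZ$ with a finite cyclic group whose order turns out to be exactly $(n,t)$. First I would record two auxiliary homomorphisms out of $E$, both legitimate because the hypothesis $gh=hg$ makes the defining relations trivial to verify on abelian targets. The first is the map $\phi : E \to \ZZ$ determined by $\phi(g)=n$, $\phi(h)=t$; it is well defined since $\phi(h^n)=nt=\phi(g^t)$. As $\phi(g)=n\neq 0$, the generator $g$ has infinite order, so $E$ is an infinite group; this already rules out $E$ being \emph{finite} cyclic. The second is the projection $\pi : E \to C_n=\langle h\,|\,h^n=1\rangle\cong \ZZ/n\ZZ$ obtained by killing $g$ (under which $h^n=g^t$ collapses to $h^n=1$), which I will use to detect torsion.

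For the direction $(n,t)=1 \Rightarrow E$ cyclic, I would show that $\phi$ is an isomorphism. Surjectivity is immediate, since $\im\phi=n\ZZ+t\ZZ=(n,t)\ZZ=\ZZ$. For injectivity I would write an arbitrary element as $g^a h^b$ (possible because $E$ is abelian and generated by $g,h$); if $an+bt=0$ then $(n,t)=1$ forces $n\mid b$, say $b=n b'$, whence $a=-tb'$ and $g^a h^b=(g^{-t}h^n)^{b'}=1$ by the relation $h^n=g^t$. Thus $E\cong\ZZ$ is cyclic, and concretely any $p,q$ with $np+tq=1$ give the single generator $g^p h^q$.

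For the converse I would argue contrapositively: assuming $d:=(n,t)>1$, I exhibit a nontrivial torsion element, which is incompatible with the infinite group $E$ being cyclic (an infinite cyclic group is torsion-free). Putting $n_1=n/d$, $t_1=t/d$, I would consider $z=g^{-t_1}h^{n_1}$; using $h^n=g^t$ one gets $z^{d}=g^{-t_1 d}h^{n_1 d}=g^{-t}h^{n}=1$, so $z$ has finite order dividing $d$. To see $z\neq 1$ I apply $\pi$, which sends $z$ to $h^{n_1}\in \ZZ/n\ZZ$, an element of order $n/(n,n_1)=n/n_1=d>1$. Hence $z$ is a nontrivial torsion element of the infinite group $E$, so $E$ cannot be cyclic.

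The step I expect to be most delicate is not the arithmetic but making sure the two maps $\phi$ and $\pi$ are genuinely well defined and that "every element is $g^a h^b$'' is justified from the presentation alone, i.e. that I invoke only the given relations together with $gh=hg$. Once these are in place, the order computation for $z$ (the lower bound coming from $\pi$) is the crux of the converse, while the kernel computation for $\phi$ is the crux of the forward implication. Alternatively, I could phrase the whole argument structurally by recognizing $E\cong \ZZ^2/\langle(-t,n)\rangle$ and computing its Smith normal form as $\ZZ\oplus \ZZ/(n,t)\ZZ$, which makes the equivalence transparent; but the map-based argument above is more self-contained and matches the elementary style used for the preceding proposition.
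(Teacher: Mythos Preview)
Your proof is correct and is essentially the same argument as the paper's: the paper defines a map $\theta:E\to Z_{n,t}:=\ZZ+\tfrac{t}{n}\ZZ$ by $g\mapsto 1$, $h\mapsto t/n$, which after the isomorphism $Z_{n,t}\cong (n,t)\ZZ$ (via multiplication by $n$) is exactly your $\phi$, and the kernel element $h^{n/d}g^{-t/d}$ it identifies is your torsion element $z$.

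The organizational differences are minor but worth noting. For the direction ``$E$ cyclic $\Rightarrow (n,t)=1$'' the paper appeals to the Hopfian property of $\ZZ$ (a surjective endomorphism of $\ZZ$ is injective, so if $E\cong\ZZ$ the surjection $\theta$ must be injective), whereas you argue contrapositively by exhibiting torsion. Your use of the auxiliary quotient $\pi:E\to C_n$ to certify $z\neq 1$ is a genuine addition: the paper simply asserts that $h^{n/d}g^{-t/d}=1$ iff $d=1$ without explaining why this element is nontrivial in $E$ when $d>1$, so your version is in fact more carefully justified at that point. Your closing remark about Smith normal form ($E\cong\ZZ^2/\langle(-t,n)\rangle\cong\ZZ\oplus\ZZ/(n,t)\ZZ$) is not in the paper and gives a clean structural perspective.
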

\begin{proof}
Denote by $d=(n,t)$ and by $Z_{n,t} = \ZZ+\displaystyle\frac tn\ZZ$ which is an abelian group and
is isomorphic to $(n,t)\ZZ\simeq\ZZ$ by the morphism $u\mapsto nu$.

Define $\theta:E\to Z_{n,t}$ by $h\mapsto \displaystyle\frac tn$ and $g\mapsto 1$. It is easy to
see that this is a morphism of groups and moreover it is surjective. In order to have $E$
isomorphic to $\ZZ$ (i.e. $E$ cyclic infinite), $\theta$ must be an isomorphism, otherwise we get a
surjective endomorphism of $\ZZ$ which is not injective and this is impossible.

So $E\simeq \ZZ$ iff $\theta$ is injective and this happens iff $(n,t)=1$. Indeed $h^rg^{-s}\mapsto
\displaystyle\frac{rt}n-s = 0\Leftrightarrow rt=ns\Leftrightarrow r=\displaystyle\frac{kn}d, s =
\displaystyle\frac{kt}d,\,k\in\ZZ$ so $\ker(\theta) = \{(h^{\frac nd}g^{-\frac td})^k: k\in\ZZ\}$
and then $\ker(\theta)=0$ iff $h^{\frac nd}=g^{\frac td}$, i.e. $d=1$.
\end{proof}

Our next goal is to describe, in the language of crossed systems,
all cyclic crossed products. That is, to identify the properties
that $(H,G,\alpha,f)$ has to verify in order to obtain a cyclic
crossed product $H\#_\alpha^f G$. As we already noticed, both $H$
and $G$ must be cyclic groups. Since $H\#_\alpha^f G$ is, in
particular, an abelian group it follows from \cite[Corrolary
1.15]{MA} that $\alpha$ must be trivial and $f$ a symmetric
2-cocycle. In order to find necessary and sufficient conditions on
$f$ such that $H\#^fG$ is cyclic we describe bellow all the
possible symmetric 2-cocycles.

For $m\ge 2$ and $n\ge 2$ or $n=\infty$ define $\Sigma_{m,n} = \{\phi:\ZZ\to\ZZ_n: \phi(0)=0,
\phi(t+m)=\phi(t),\forall t\in\ZZ\}$ with the convention that $\ZZ_\infty= \ZZ$.

\begin{proposition}\label{calcul_cociclii} The symmetric normalized 2-cocycles $f:C_m\times C_m\to C_n$ are in bijection with
the set $\Sigma_{m,n}$.
\end{proposition}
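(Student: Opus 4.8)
The plan is to exhibit explicit mutually inverse maps between the set of symmetric normalized $2$-cocycles $f\colon C_m\times C_m\to C_n$ and $\Sigma_{m,n}$, after identifying $C_n$ with $\ZZ_n$ written additively, so that the cocycle condition \equref{CCpropiu} reads $f(g_1,g_2)+f(g_1g_2,g_3)=f(g_2,g_3)+f(g_1,g_2g_3)$. Writing $C_m=\{a^t\}$ with $a^m=1$, I would send a cocycle $f$ to the function $\phi_f\colon\ZZ\to\ZZ_n$, $\phi_f(t)=f(a^t,a)$. This lands in $\Sigma_{m,n}$: normalization together with \equref{norm2} gives $\phi_f(0)=f(1,a)=0$, while $a^{t+m}=a^t$ forces $\phi_f(t+m)=\phi_f(t)$.

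The key step that recovers $f$ from $\phi_f$ is a recursion coming straight from the cocycle identity. Taking $g_1=a^i$, $g_2=a^j$, $g_3=a$ in \equref{CCpropiu} yields $f(a^i,a^{j+1})=f(a^i,a^j)+\phi_f(i+j)-\phi_f(j)$; since $f(a^i,1)=0$ by normalization, telescoping in $j$ gives the closed formula $f(a^i,a^j)=\sum_{l=0}^{j-1}\bigl(\phi_f(i+l)-\phi_f(l)\bigr)$. In particular $f$ is completely determined by $\phi_f$, so $f\mapsto\phi_f$ is injective.

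For surjectivity I would, conversely, attach to any $\phi\in\Sigma_{m,n}$ the function $f_\phi(a^i,a^j)=\sum_{l=0}^{j-1}\bigl(\phi(i+l)-\phi(l)\bigr)$ (with representatives $0\le i,j\le m-1$) and check that it is a symmetric normalized $2$-cocycle with $\phi_{f_\phi}=\phi$. Well-definedness on $C_m\times C_m$ uses that replacing $i$ or $j$ by $i+m$ or $j+m$ only adds sums of $\phi$ over a full period of $m$ consecutive integers, which are offset-independent and hence cancel. Normalization is immediate, and $\phi_{f_\phi}(t)=f_\phi(a^t,a)=\phi(t)$ is the $j=1$ case. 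Symmetry becomes transparent after rewriting $f_\phi(a^i,a^j)=\Phi(i+j)-\Phi(i)-\Phi(j)$ with $\Phi(t)=\sum_{l=0}^{t-1}\phi(l)$ (valid on integer representatives), the one caveat being that $\Phi$ is not periodic, so this reformulation must be used in tandem with the well-definedness check above.

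The computational heart, and the step I expect to be the main obstacle, is verifying the cocycle condition for $f_\phi$: expanding both sides of \equref{CCpropiu} for $g_1=a^{i},g_2=a^{j},g_3=a^{k}$ and keeping $i+j$ unreduced (legitimate by well-definedness in the first argument), the $\phi(\cdot)$-sums reorganize under the substitution $l\mapsto l+j$ together with the telescoping of the partial sums $\sum_{l=0}^{j-1}\phi(l)$, and the two sides coincide. Combined with $f_{\phi_f}=f$ (the telescoping formula of the second paragraph) and $\phi_{f_\phi}=\phi$, this shows the two assignments are mutually inverse, yielding the claimed bijection. The argument is uniform in $n$, the case $n=\infty$, i.e.\ $C_n=\ZZ$, being covered verbatim.
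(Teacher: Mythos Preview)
Your proposal is correct and follows essentially the same approach as the paper: the paper's partial sum $S_k=\phi(0)+\cdots+\phi(k-1)$ is precisely your $\Phi(k)$, and the paper defines the cocycle attached to $\phi$ by $f(x^k,x^l)=a^{S_{k+l}-S_k-S_l}$, which is your formula $f_\phi(a^i,a^j)=\Phi(i+j)-\Phi(i)-\Phi(j)$ written multiplicatively, while in the other direction the paper sets $a^{\phi(k)}=f(x,x^k)$, matching your $\phi_f$. The only cosmetic difference is that you derive the closed formula via the telescoping recursion before introducing $\Phi$, whereas the paper posits the $S_k$-formula directly and leaves the recovery of $f$ from $\phi_f$ to ``straightforward computation''.
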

\begin{proof}
Let $\phi\in\Sigma_{m,n}$ and consider $x$ a generator of $C_m$ and $a$ a generator of $C_n$.

Denote, $S_k^\phi = S_k = \phi(0)+\ldots+\phi(k-1),\forall k\ge 1$.

We define $f(x^k,x^l) = a^{S_{k+l}-S_k-S_l}$ for $k,l\ge 1$ (observe that if $n\neq \infty$ then
$a^t$ is well defined for $t\in\ZZ_n$ since $a$ has order $n$).

It is easy to verify that $f(x^{k+sm},x^l) = f(x^k,x^{l+tm}) = f(x^k,x^l),(\forall) s,t\ge 0,
(\forall) l,k\ge1$. Observe also that $f(x,x^k) = a^{\phi(k)}$. This will be useful for the
converse.

$f$ is obviously symmetric.

We need to prove that $f$ is a 2-cocycle, that is:
\begin{eqnarray*}\label{2-cocycle}
f(x^k,x^l)f(x^{k+l},x^p) &=&f(x^l,x^p)f(x^k,x^{l+p})\\
a^{S_{k+l}-S_k-S_l+S_{k+l+p}-S_p-S_{k+l}} &=&
a^{S_{p+l}-S_p-S_l+S_{k+l+p}-S_k-S_{l+p}}\\
a^{S_{k+l+p}-S_k-S_l-S_p} &=& a^{S_{k+l+p}-S_k-S_l-S_p},(\forall)\, k,l,p\ge 1
\end{eqnarray*} and the later is clearly true.

So to each $\phi\in\Sigma_{m,n}$ we have associated a symmetric 2-cocycle.

Now suppose $f$ is a symmetric 2-cocycle. Define $\phi_f = \phi$ such that $a^{\phi(k)} =
f(x,x^k),k\in\ZZ$. It is obvious that $\phi\in \Sigma_{m,n}$ because $x$ has order $m$ and $f$ is
normalized.

Define $S_k^\phi = S_k = \phi_f(0)+\ldots+\phi_f(k-1)$.

Using the cocycle condition on $f$ and straightforward computation it follows that $f(x^l,x^k) =
a^{S_{k+l}-S_k-S_l},(\forall) k,l\ge 1$.

Hence, the map that associates to each cocycle the function $\phi_f$ is a bijective map between the
cocycles and $\Sigma_{m,n}$.
\end{proof}

\begin{proposition}\label{crossed}
A crossed product $C_n\#^f C_m, m,n\ge 2$ is a cyclic group if and
only if $(S_m,m,n) = 1$, where $S_k = \phi(0)+\ldots+\phi(k-1)$,
$\phi:\ZZ\to\ZZ_n, a^{\phi(k)} = f(x,x^k)$ and $x$ is a generator
for $C_{m}$.
\end{proposition}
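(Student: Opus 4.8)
The plan is to recognize $C_n\#^f C_m$ as a special case of the Hölder presentation of \thref{thmHolder} and then invoke Proposition~\ref{finvsfin=ciclic}. Since the action $\alpha$ is trivial here, the multiplication \equref{tw4} shows that the generator of $C_m$ commutes with the generator of $C_n$, so I expect the Hölder exponent to be $j=1$. Concretely I would take $x$ a generator of $C_m$ and $a$ a generator of $C_n$, and set $a=(a,1)$ and $b=(1,x)$ inside $E=C_n\#^f C_m$. Using \equref{tw4} together with the normalization $f(1,x)=f(x,1)=f(1,1)=1$ (see \equref{norm2}), one checks directly that $a^n=(a^n,1)=(1,1)$ and that $ab=(a,x)=ba$, i.e. $b^{-1}ab=a$; thus $j=1$.

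The one genuine computation is the value of $b^m$. Writing $b^k=(a^{c_k},x^k)$ and applying \equref{tw4} gives the recursion $c_{k+1}=c_k+\phi(k)$, since by symmetry and the defining property from Proposition~\ref{calcul_cociclii} we have $f(x^k,x)=f(x,x^k)=a^{\phi(k)}$; the base case is $c_1=0$. An immediate induction then yields $c_k=\phi(0)+\phi(1)+\cdots+\phi(k-1)=S_k$ (recall $\phi(0)=0$). In particular $b^m=(a^{S_m},x^m)=(a^{S_m},1)=a^{S_m}$. Hence $E$ admits the Hölder presentation
$$E=C_n\#^f C_m\cong\langle a,b\mid a^n=1,\ b^m=a^{S_m},\ b^{-1}ab=a\rangle=C_n\#_{S_m}^{\,1}\,C_m,$$
where the pair $i=S_m$, $j=1$ trivially satisfies the constraints \equref{relatiicicice2}.

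With this identification the conclusion is immediate. By Proposition~\ref{finvsfin=ciclic} the group $C_n\#_i^j C_m$ is cyclic if and only if $j=1$ and $(m,n,i)=1$. Here $j=1$ holds automatically and $i=S_m$, so $E$ is cyclic if and only if $(m,n,S_m)=1$, which is exactly the asserted condition $(S_m,m,n)=1$.

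I expect the main obstacle to be purely the bookkeeping in the inductive computation of $b^m$, in particular reading the exponent correctly in $\ZZ_n$ and justifying the step $f(x^k,x)=a^{\phi(k)}$ from Proposition~\ref{calcul_cociclii}. Once $b^m=a^{S_m}$ is established, the rest is a direct appeal to the earlier Hölder presentation and to Proposition~\ref{finvsfin=ciclic}.
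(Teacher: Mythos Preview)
Your approach is exactly the paper's: identify $(a,1)$ and $(1,x)$ as generators, verify they satisfy the H\"older relations with $j=1$ and $i\equiv S_m\pmod n$, and then invoke Proposition~\ref{finvsfin=ciclic}. The paper packages this as an explicit homomorphism $\theta$ from $C_n\#_i^1 C_m$ to $C_n\#^f C_m$, but the content is identical.

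One small point you should make explicit: checking that $(a,1)$ and $(1,x)$ satisfy the relations only gives a surjection from $C_n\#_{S_m}^{1}C_m$ onto the subgroup they generate. You still need to note that they generate all of $C_n\#^f C_m$; your own computation $b^k=(a^{S_k},x^k)$ gives this immediately via $(a^u,x^k)=(a,1)^{u-S_k}(1,x)^k$, and then equality of orders ($mn$ on both sides) finishes the isomorphism. The paper spells this out, and you should too.
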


\begin{remark}
Observe that $S_m$ is not a number, but a class (modulo $n$);
however $(S_m,m,n)$ does not depend on the choice of a
representant for $S_m$.
\end{remark}

\begin{proof}
We will prove that $C_n\#^f C_m$ is isomorphic to $C_n\#_i^1 C_m$
where $i\in\{0\,,\,1\,,\ldots,\,n-1\}$ such that $S_m=i$(mod $n$).
The conclusion will follow from Proposition \ref{finvsfin=ciclic}.

Let $i$ be the unique representant of $S_m$ from
$\{0\,,\,1\,,\ldots,\,n-1\}$.

Denote by $E = <a\,,\,b\,|\,a^n=1\,,\,b^m=a^i\,,\,ab=ba>$ and by
$F=C_n\#^f C_m$ the twisted product associated to the 2-cocycle
$f$ (see Example 1.2.3)

Define $\theta:E\to F$ by $\theta(a) = (a,1)$ and $\theta(b) =
(1,x)$.

 It is straightforward to see that $(1,x)^k =
(a^{S_k},x^k),\forall k\ge 1$ hence $(1,x)^m =
(a^{S_m},x^m)=(a^i,1)=(a,1)^i$ and $(1,x)^k \not\in <a>,\forall
k\in\{1\,,\ldots,\,m-1\}$. Also $(a,1)^n=1$ and
$(a,1)(1,x)=(a,x)=(1,x)(a,1)$. That is $(a,1)$ and $(1,x)$ verify
the same relations in $F$ as $a$ and $b$ do in $E$. Hence $\theta$
is a morphism of groups.

Let's observe that $(a,1)$ and $(1,x)$ generate the group $F$.
Indeed consider $(a^u,x^k)\in F$. Then $(a^u,x^k) = (a^u,1)(1,x^k)
= (a^{u-S_k},1)(a^{S_k},x^k) = (a,1)^{u-S_k}(1,x)^k$. Therefore
the morphism $\theta$ is also surjective and since the groups are
finite it is an isomorphism.
\end{proof}

\begin{proposition}\label{infinitvsfinit=ciclic}
A crossed product $C_g\#^f C_m$, $m\ge 2$, is cyclic iff $(S_m,m)=1$, where $S_m =
\phi(0)+\ldots+\phi(m-1),\phi:\ZZ\to\ZZ,g^{\phi(k)}=f(x,x^k),<x>=\ZZ_m$.
\end{proposition}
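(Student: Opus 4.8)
The plan is to run the argument of Proposition \ref{crossed} in the case $n=\infty$, reducing the claim to Proposition \ref{inf-fin=ciclic}. As in that proof, Proposition \ref{calcul_cociclii} (now applied with $n=\infty$, so that $\phi:\ZZ\to\ZZ$) identifies the symmetric normalized cocycle $f$ with the function $\phi\in\Sigma_{m,\infty}$ determined by $g^{\phi(k)}=f(x,x^k)$, and gives $f(x^k,x^l)=g^{S_{k+l}-S_k-S_l}$, where $g$ is a generator of $C_g$, $x$ a generator of $C_m$, and $S_k=\phi(0)+\ldots+\phi(k-1)$. I would then set $E=<g,h\,|\,h^m=g^{S_m},\,gh=hg>$, which is precisely the group of Proposition \ref{inf-fin=ciclic} with $n=m$ and $t=S_m$; since $m\ge2$, that proposition tells us $E$ is cyclic if and only if $(S_m,m)=1$. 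So it suffices to prove $C_g\#^f C_m\simeq E$.

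To exhibit the isomorphism I would define $\theta:E\to F:=C_g\#^f C_m$ by $\theta(g)=(g,1)$ and $\theta(h)=(1,x)$. A short induction on $k$, using $S_1=\phi(0)=0$ together with the formula for $f$, yields $(1,x)^k=(g^{S_k},x^k)$ for every $k\ge1$; in particular $(1,x)^m=(g^{S_m},x^m)=(g^{S_m},1)=(g,1)^{S_m}$, while $f(1,x)=f(x,1)=1$ shows that $(g,1)$ and $(1,x)$ commute. Hence the images of $g$ and $h$ satisfy the defining relations of $E$, so $\theta$ is a well-defined group homomorphism. It is surjective because $(g^u,x^k)=(g,1)^{u-S_k}(1,x)^k=\theta(g^{u-S_k}h^k)$, using $f(1,x^k)=1$.

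The only genuine difference from the finite case is \emph{injectivity}: in Proposition \ref{crossed} the two groups were finite of equal order, so surjectivity gave bijectivity for free, but here $F$ is infinite and that shortcut disappears. Instead I would use a normal form in $E$. Being abelian with the single relation $h^m=g^{S_m}$, the group $E$ is $\ZZ^2/\langle(-S_m,m)\rangle$, so every element has a unique expression $g^a h^b$ with $a\in\ZZ$ and $0\le b<m$ (two such pairs differ by an integer multiple of $(-S_m,m)$, which the constraint on $b$ forces to vanish). Since $\theta(g^a h^b)=(g^{a+S_b},x^b)$, the equality $\theta(g^a h^b)=(1,1)$ forces $x^b=1$, hence $b=0$, and then $g^a=1$, hence $a=0$; thus $\ker\theta=1$ and $\theta$ is an isomorphism. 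Combined with Proposition \ref{inf-fin=ciclic}, this gives that $C_g\#^f C_m$ is cyclic if and only if $(S_m,m)=1$. I expect the bookkeeping for this normal form — equivalently, recognizing $E\simeq\ZZ\oplus\ZZ/(S_m,m)$ — to be the only point requiring care beyond copying the finite-case computation.
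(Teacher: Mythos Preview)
Your proof is correct and follows essentially the same route as the paper: both construct the same homomorphism $\theta:E\to F$ with $E=\langle g,h\mid h^m=g^{S_m},\,gh=hg\rangle$, verify the relations and surjectivity identically, and then invoke Proposition~\ref{inf-fin=ciclic}. The only cosmetic difference is the injectivity check: you use a normal form $g^ah^b$ with $0\le b<m$, whereas the paper computes $\ker\theta$ directly as $\{(g^{-S_m}h^m)^s:s\in\ZZ\}=\{1\}$.
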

\begin{proof} We will prove that $C_g\#^f C_m$ is isomorphic to $E=<g\,,\,h\,|\,h^m=g^{S_m}\,,\,gh=hg>$ hence
the conclusion follows from Proposition \ref{inf-fin=ciclic}.

Denote by $F = C_g\#^f C_m$ the twisted product associated to the 2-cocycle $f$ (see Example
1.2.3).

Define $\theta:E\to F$ by $\theta(g) = (g,1)$ and $\theta(h) =
(1,x)$. It is easy to see that $(g,1)(1,x)=(g,x)=(1,x)(g,1)$ and
$(1,x)^k=(g^{S_k},x^k)$. Hence $(1,x)^m = (g^{S_m},1) =
(g,1)^{S_m}$. Therefore $\theta$ is a morphism of groups.
Moreover, since $(g^k,x^l) = (g,1)^{k-S_l}(1,x)^l$ we obtain that
$\theta$ is a surjection. Furthermore :
\begin{eqnarray*}
\ker(\theta) &=& \{g^kh^l: k,l\in\ZZ, \theta(g^kh^l)=(1,1)\}\\
 &=&\{g^kh^l:k,l\in\ZZ, (g,1)^k(1,x)^l=(1,1)\}\\
 &=&  \{g^kh^l:k,l\in\ZZ, (g^{k+S_l},x^l)=(1,1)\}\\
 &=&\{g^kh^l:k,l\in\ZZ,m|l, k=-S_l\}\\
 &=& \{g^kh^l:k,l\in\ZZ,l=sm,k=-sS_m\}\\
 &=& \{(g^{-S_m}h^m)^s:s\in\ZZ\}\\
 &=& \{1\}
\end{eqnarray*}
Hence $\theta$ is a bijection.

\end{proof}

In conclusion, with the above notations, we proved the following
theorem:

\begin{theorem}\thlabel{main_theorem}
A normalized crossed product $E=H\#_\alpha^f\, G$ is a cyclic group if and only if one of the
following are true:
\begin{enumerate}
\item $H\simeq C_n$, $G\simeq C_m$, for some $m,n\ge 2$, $\alpha$ is trivial and $(S_m,m,n)=1$
\item $H\simeq C_g$, $G\simeq C_m$, for some $m\ge 2$, $\alpha$ is trivial and $(S_m,m)=1$.
\end{enumerate}
\end{theorem}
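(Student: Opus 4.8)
The plan is to assemble the final statement from the case analysis carried out in this section together with the two structural reductions supplied by \cite{MA}. First I would record the necessity reductions. Since $H$ embeds into $E$ as a normal subgroup and $G$ is realised as the quotient of $E$ by that subgroup, and since every subgroup and every quotient of a cyclic group is again cyclic, the hypothesis that $E=H\#_\alpha^f G$ be cyclic immediately forces both $H$ and $G$ to be cyclic. Moreover a cyclic group is abelian, so \cite[Corollary 1.15]{MA} applies and tells us that $\alpha$ must be the trivial action and $f$ a symmetric $2$-cocycle; in other words $E$ is necessarily a twisted product $H\times^f G$. This already pins down the shape of every condition in the statement.

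Next I would run through the four combinations obtained by letting each of $H$ and $G$ be finite or infinite cyclic, and discard the three impossible ones exactly as in the preamble to this section. If $H\simeq C_n$ is finite and $G\simeq C_g$ is infinite, then $E$ is an infinite group that nevertheless contains a nontrivial torsion element (the image of a generator of $C_n$), so it cannot be infinite cyclic, precisely as noted for \thref{finitvsinfinit}. If both $H\simeq C_{g_1}$ and $G\simeq C_{g_2}$ are infinite, then $E$, being cyclic and surjecting onto the infinite group $C_{g_2}$ with nontrivial kernel, would have to be infinite cyclic while exhibiting a proper quotient isomorphic to $C_{g_2}$; this is impossible since every proper quotient of $\ZZ$ is finite. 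Here the non-abelian presentations appearing in \thref{infinitvsfinit} and \thref{cicliceinfinite} do not intervene, because the reduction above has already forced $\alpha$ trivial and $f$ symmetric, so $E$ is abelian. Only two configurations survive: $C_n\times^f C_m$ with $m,n\ge 2$, and $C_g\times^f C_m$ with $m\ge 2$.

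It then remains to feed these two surviving cases into the propositions already established. For $C_n\times^f C_m$, Proposition~\ref{crossed} shows that the twisted product is cyclic precisely when $(S_m,m,n)=1$, which is exactly clause $(1)$; for $C_g\times^f C_m$, Proposition~\ref{infinitvsfinit=ciclic} shows that cyclicity is equivalent to $(S_m,m)=1$, which is clause $(2)$. For the converse direction I would read the same two propositions the other way: given the stated arithmetic condition together with $\alpha$ trivial and $f$ the symmetric $2$-cocycle attached to $\phi$ through Proposition~\ref{calcul_cociclii}, each proposition produces an explicit isomorphism of $H\times^f G$ onto a cyclic group, so the listed conditions are also sufficient.

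I expect the only real care needed to be bookkeeping rather than a genuine obstacle: one must check that the quantity $S_m$ is meaningfully attached to $f$ (in case $(1)$ it is a class modulo $n$, as flagged in the Remark following Proposition~\ref{crossed}, but $(S_m,m,n)$ is independent of the chosen representative), and that the two arithmetic conditions are written in the same normalisation as in the feeding propositions. With those matchings in place the theorem is simply the union of Propositions~\ref{crossed} and \ref{infinitvsfinit=ciclic} after the abelianness reduction, the genuinely hard content having already been absorbed into the technical lemma and Proposition~\ref{finvsfin=ciclic}.
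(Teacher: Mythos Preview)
Your proposal is correct and follows essentially the same route as the paper: the theorem is stated in the paper simply as the conclusion of the preceding case analysis, and your write-up assembles exactly those pieces---the cyclic-subgroup/quotient reduction, the abelianness reduction from \cite[Corollary~1.15]{MA}, the elimination of the $C_n\#C_g$ and $C_{g_1}\#C_{g_2}$ cases, and the invocation of Propositions~\ref{crossed} and~\ref{infinitvsfinit=ciclic}. The only cosmetic difference is that you apply the abelianness reduction up front and thereby bypass the non-abelian presentations of \thref{infinitvsfinit} and \thref{cicliceinfinite} in one stroke, whereas the paper discards those presentations individually before invoking \cite{MA}; the content is the same.
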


Let us consider a numerical example. Define
$\phi\in\Sigma_{3,\infty}$ by $\phi(0)=0,\phi(1)=1,\phi(2)=1$ and
consider the corresponding symmetric 2-cocyle
$f:\ZZ_3\times\ZZ_3\to\ZZ$ (cf. Proposition
\ref{calcul_cociclii}).

An easy computation leads us to

$f(\hat0,\hat u) = f(\hat u,\hat0) = f(\hat2,\hat2) = 0,\,\forall\hat u\,\in\ZZ_3$

$f(\hat1,\hat1)=f(\hat1,\hat2)=f(\hat2,\hat1) = 1$

Since $S_3=2$ it follows from \thref{main_theorem} that $\ZZ\times^f\ZZ_3\simeq\ZZ$.

We can also find the generator of $\ZZ\times^f\ZZ_3$, namely $(0,\hat 2)$. Indeed:

$(0,\hat2)^2=(0+0+f(\hat2,\hat2),\hat2+\hat2) = (0,\hat1)$ and

$(0,\hat2)^3 = (0+0+f(\hat1,\hat2),\hat2+\hat1) = (1,\hat0)$.

\section*{Acknowledgement}

We wish to thank Professor Gigel Militaru, who suggested the
problem studied here, for his great support and for the useful
comments from which this manuscript has benefitted.

\end{document}